\documentclass[11pt,letterpaper,reqno]{amsart}
\usepackage{amssymb,amsmath,amsthm,amsfonts,bbm,mathtools}

\usepackage{hyperref}
\hypersetup{pdfstartview={FitH}}
\usepackage{color}

\addtolength{\hoffset}{-1.5cm} \addtolength{\textwidth}{3cm}
\addtolength{\voffset}{-0.75cm} \addtolength{\textheight}{1.5cm}

\newtheorem{theorem}{Theorem}[section]
\newtheorem{lemma}[theorem]{Lemma}

\numberwithin{equation}{section}

\begin{document}

\title{A sharp nonlinear Hausdorff--Young inequality for small potentials}

\author[V. Kova\v{c}]{Vjekoslav Kova\v{c}}
\address{Vjekoslav Kova\v{c}, Department of Mathematics, Faculty of Science, University of Zagreb, Bijeni\v{c}ka cesta 30, 10000 Zagreb, Croatia}
\email{vjekovac@math.hr}

\author[D. Oliveira e Silva]{Diogo Oliveira e Silva}
\address{Diogo Oliveira e Silva, School of Mathematics, University of Birmingham, Edgbaston, Birmingham, B15 2TT England, and Hausdorff Center for Mathematics, Endenicher Allee 62, 53115 Bonn, Germany}
\email{d.oliveiraesilva@bham.ac.uk}

\author[J. Rup\v{c}i\'{c}]{Jelena Rup\v{c}i\'{c}}
\address{Jelena Rup\v{c}i\'{c}, Faculty of Transport and Traffic Sciences, University of Zagreb, Vukeli\'{c}eva 4, 10000 Zagreb, Croatia}
\email{jrupcic@fpz.hr}

\date{August 1, 2017. \emph{Revised}: April 16, 2018.}

\subjclass[2010]{
Primary 42A38; 
Secondary 34L25} 
\keywords{Nonlinear Fourier transform, Dirac scattering transform, Hausdorff--Young inequality.}

\begin{abstract}
The nonlinear Hausdorff--Young inequality follows from the work of Christ and Kiselev.
Later Muscalu, Tao, and Thiele asked if the constants can be chosen independently of the exponent.
We show that the nonlinear Hausdorff--Young quotient admits an even better upper bound than the linear one, provided that the function is sufficiently small in the $\textup{L}^1$-norm.
The proof combines perturbative techniques with the sharpened version of the linear Hausdorff--Young inequality due to Christ.
\end{abstract}

\maketitle

\section{Introduction}

In this paper, we investigate the Hausdorff--Young inequality for a nonlinear version of the Fourier transform, and establish Theorem \ref{thm:main} below.
Before stating it precisely, we briefly discuss the linear case. Given a complex-valued integrable function $f$ on the real line, we normalize its Fourier transform as follows:
\[ \widehat{f}(\xi)=\int_{\mathbb{R}} f(x) e^{-2\pi i x\xi} \,{\rm d} x. \]
In this way, the Fourier transform is a contraction from $\textup{L}^1(\mathbb{R})$ to $\textup{L}^\infty(\mathbb{R})$ and it extends to a unitary operator on $\textup{L}^2(\mathbb{R})$.
Standard interpolation tools can then be used to show that, for any $p\in[1,2]$, the Fourier transform is also a contraction from $\textup{L}^p(\mathbb{R})$ to $\textup{L}^q(\mathbb{R})$, where $q=\frac p{p-1}$ denotes the exponent conjugate to $p$.
This is the content of the classical Hausdorff--Young inequality.
Its sharp version  was first established by Babenko \cite{B61} in the case when the exponent $q$ is an even integer, and then by Beckner \cite{B75} for general exponents. It states that, if $p\in[1,2]$, then
\begin{equation}\label{sharpHY}
\|\widehat{f}\|_{\textup{L}^q(\mathbb{R})}\leq {\bf B}_p \|f\|_{\textup{L}^p(\mathbb{R})}
\end{equation}
for every $f\in \textup{L}^p(\mathbb{R})$, where the optimal constant is given by
\begin{equation}\label{Beckner}
{\bf B}_p = p^{\frac1{2p}}q^{-\frac1{2q}}.
\end{equation}
An easy computation shows that Gaussians, i.e. functions of the form
\begin{equation}\label{Gaussian}
G(x)=c e^{-\alpha x^2+ v x}
\end{equation}
with $\alpha>0$ and $c,v\in\mathbb{C}$, turn inequality \eqref{sharpHY} into an equality.
In other words, Gaussians are \emph{extremizers} for inequality \eqref{sharpHY}.
In the converse direction, Lieb \cite{L90} has shown that all extremizers for inequality \eqref{sharpHY} are in fact Gaussians.
Recently, Christ \cite{C14} further refined inequality \eqref{sharpHY} by establishing the following sharpened version:
Given $p\in (1,2)$, there exists a constant $c_p>0$ such that, for every nonzero function $f\in \textup{L}^p(\mathbb{R})$,
\begin{equation}\label{Christ}
\| \widehat{f} \|_{\textup{L}^q(\mathbb{R})} \leq \Big( {\bf B}_p - c_p\frac{\textup{dist}^2_p(f,\mathfrak{G})}{\|f\|^2_{\textup{L}^p(\mathbb{R})}} \Big) \|f\|_{\textup{L}^p(\mathbb{R})}.
\end{equation}
Here, the distance from $f\in \textup{L}^p(\mathbb{R})$ to the set of all Gaussians, denoted $\mathfrak{G}$, is naturally \mbox{defined as}
\[ \textup{dist}_p(f,\mathfrak{G})=\inf_{G\in\mathfrak{G}} \|f-G\|_{\textup{L}^p(\mathbb{R})}. \]

We now describe the nonlinear setting of the present paper.
We are interested in the simplest nonlinear model of the Fourier transform, also known as the \emph{Dirac scattering transform} or the \emph{$\textup{SU(1,1)}$-scattering transform}.
To describe it precisely, take a compactly supported integrable function $f\colon\mathbb{R}\to\mathbb{C}$ and consider the initial-value problem
\begin{equation}\label{ODE}
\frac{\partial}{\partial x}
\begin{bmatrix} a(x,\xi) \\ b(x,\xi) \end{bmatrix}
 = \begin{bmatrix} 0 & \overline{f(x)} e^{2\pi i x \xi} \\ f(x) e^{-2\pi i x \xi} & 0 \end{bmatrix}
\begin{bmatrix} a(x,\xi) \\ b(x,\xi) \end{bmatrix},
\qquad
\begin{bmatrix} a(-\infty,\xi) \\ b(-\infty,\xi) \end{bmatrix}
 = \begin{bmatrix} 1 \\ 0 \end{bmatrix}.
\end{equation}
For each $\xi\in\mathbb{R}$, this problem has a unique solution $a(\cdot,\xi)$, $b(\cdot,\xi)$ in the class of absolutely continuous functions.
We simply write $a(\xi)$, $b(\xi)$ in place of the limits $a(+\infty,\xi)$, $b(+\infty,\xi)$, and define the \emph{nonlinear Fourier transform} of $f$ to be the function
\[ \mathbb{R}\to\mathbb{C}^2,\quad \xi \mapsto \begin{bmatrix} a(\xi) \\ b(\xi) \end{bmatrix}. \]
If $f$ vanishes outside some interval $I=[\alpha,\beta]$, then the initial condition in \eqref{ODE} translates into $a(\alpha,\xi)=1$, $b(\alpha,\xi)=0$, while $a(+\infty,\xi)$, $b(+\infty,\xi)$ can be interpreted as $a(\beta,\xi)$, $b(\beta,\xi),$ respectively.
The differential equation forces
\[ |a(\xi)|^2 - |b(\xi)|^2 = |a(\alpha,\xi)|^2 - |b(\alpha,\xi)|^2 = 1, \]
which in particular means that the size of the vector $[a(\xi) \; b(\xi)]^T$ is retained by the quantity $|a(\xi)|$ alone.
Occasionally it is more convenient to add an extra column and turn the above vector into a $2\times 2$ matrix belonging to the classical Lie group $\textup{SU}(1,1)$, see e.g. \cite{MS13} or \cite{MTT03a}.

Sources of motivation for considering this precise instance of the nonlinear Fourier transform include the eigenvalue problem for the Dirac operator, the study of completely integrable systems and scattering theory, and the Riemann--Hilbert problem; see the expository note \cite{T02} or the book \cite{MS13} for further information, and the lecture notes \cite{TT03} for several related examples in the discrete setting.
In those applications the input function $f$ is often referred to as a \emph{potential}.
The Dirac scattering transform is the simplest case of a more general transform, the AKNS--ZS nonlinear Fourier transform; see \cite{AKNS74}, \cite{ZS71} or \cite{MS13} for details.

There is a strong parallel between the nonlinear and the linear Fourier transforms.
It is a straightforward exercise to verify the following analogues of the symmetry rules for the linear Fourier transform, see \cite{T02}.
\begin{itemize}

\item \emph{Unimodular homogeneity}: If $f(x)=e^{i\theta}f_1(x)$, where $\theta\in\mathbb{R}$, then
\[ a(\xi)=a_1(\xi), \;\;\; b(\xi)=e^{i\theta}b_1(\xi). \]

\item \emph{Modulation symmetry}: If $f(x)=e^{2\pi i x \xi_0}f_1(x)$, where $\xi_0\in\mathbb{R}$, then
\[ a(\xi)=a_1(\xi-\xi_0), \;\;\; b(\xi)=b_1(\xi-\xi_0). \]

\item \emph{Translation symmetry}: If $f(x)=f_1(x-x_0)$, where $x_0\in\mathbb{R}$, then
\[ a(\xi)=a_1(\xi), \;\;\; b(\xi)=e^{-2\pi i x_0 \xi}b_1(\xi). \]

\item \emph{$\textup{L}^1$-normalized dilation symmetry}: If $f(x)=\lambda^{-1} f_1(\lambda^{-1}x)$, where $\lambda>0$, then
\[ a(\xi)=a_1(\lambda\xi), \;\;\; b(\xi)=b_1(\lambda\xi). \]

\item \emph{Conjugation symmetry}: If $f(x)=\overline{f_1(x)}$, then
\[ a(\xi)=\overline{a_1(-\xi)}, \;\;\; b(\xi)=\overline{b_1(-\xi)}. \]

\item \emph{A substitute for additivity}: If $f(x)=f_1(x)+f_2(x)$, where the support of $f_1$ lies to the left of that of $f_2$, then
\[ a(\xi) = a_1(\xi)a_2(\xi) + b_1(\xi)\overline{b_2(\xi)}, \;\;\; b(\xi) = a_1(\xi)b_2(\xi) + b_1(\xi)\overline{a_2(\xi)}. \]

\end{itemize}

So far we have only defined the nonlinear Fourier transform for compactly supported $\textup{L}^1$ functions, but this can be easily extended to include general integrable functions $f:\mathbb{R}\to\mathbb{C}$ as follows.
Given $f\in \textup{L}^1(\mathbb{R})$ and $R>0$, denote by $\xi\mapsto [a_R(\xi)\; b_R(\xi)]^T$ the nonlinear Fourier transform of the truncated function
\[ f_R(x) := \begin{cases} f(x), & \text{if } -R\leq x\leq R, \\ 0, & \text{otherwise}. \end{cases} \]
Then the nonlinear Fourier transform of $f$ is defined via
\begin{equation}\label{noncomplimits}
a(\xi):= \lim_{R\to\infty} a_R(\xi), \;\;\; b(\xi):=\lim_{R\to\infty} b_R(\xi).
\end{equation}
Note that these limits exist, for every $\xi\in\mathbb{R}$.
Moreover, this definition coincides with the one from \cite{MS13} or \cite{T02}, formulated in terms of uniformly convergent multilinear expansions.
The assignment $f\mapsto(a,b)$ retains the six symmetry rules mentioned above.

We proceed to describe some nonlinear analogues of standard estimates for the linear Fourier transform.
The nonlinear Riemann--Lebesgue estimate, which follows easily from Gr\"onwall's inequality, states that
\[ \big\| (\log |a(\xi)|^2)^\frac{1}{2} \big\|_{\textup{L}^{\infty}_\xi(\mathbb{R})}
\leq \big\| \log(|a(\xi)|\!+\!|b(\xi)|) \big\|_{\textup{L}^{\infty}_\xi(\mathbb{R})}
\leq \|f\|_{\textup{L}^1(\mathbb{R})}, \]
for every potential $f\in\textup{L}^1(\mathbb{R})$.
The nonlinear Plancherel identity, which is a well-known scattering identity that can be established via complex contour integration (see e.g.\@ the Appendix in \cite{MTT03a}), states that
\[ \big\| (\log |a(\xi)|^2)^\frac{1}{2} \big\|_{\textup{L}^2_\xi(\mathbb{R})} = \|f\|_{\textup{L}^2(\mathbb{R})},\]
for every $f\in\textup{L}^1(\mathbb{R})\cap\textup{L}^2(\mathbb{R})$.
The latter equality can even be used to extend the definition of $a,b$ for functions $f$ that are only square-integrable.
In this case the existence of the limits \eqref{noncomplimits} for a.e.\@ $\xi\in\mathbb{R}$ is a well-known open problem (see \cite{CK01b}, \cite{MTT03a}, \cite{MTT03b}), so it cannot provide a straightforward extension of the nonlinear Fourier transform as before.
Even if interpolation is not available in the present nonlinear setting, the work of Christ and Kiselev on the spectral theory of one-dimensional Schr\"odinger operators \cite{CK01a}, \cite{CK01b} establishes a version of the nonlinear Hausdorff--Young inequality  which translates into the present context as follows: If $p\in[1,2]$, then there exists a constant $C_p<\infty$, such that
\begin{equation}\label{nonlinHY}
\big\| (\log |a(\xi)|^2)^\frac{1}{2} \big\|_{\textup{L}^q_\xi(\mathbb{R})} \leq C_p \|f\|_{\textup{L}^p(\mathbb{R})},
\end{equation}
for every potential $f\in\textup{L}^1(\mathbb{R})\cap\textup{L}^p(\mathbb{R})$.
An interesting question raised in \cite{MTT03a} is whether the constants $C_p$ can be chosen uniformly in $p$, as $p\to 2^{-}$.
This has been confirmed in a particular toy model in \cite{K12}, but remains an open problem in its full generality.
By considering Gaussian potentials $G(x) = c \exp(-\pi x^2)$
and linearizing as $c\to 0$, one can check that the constant in \eqref{nonlinHY} is at least as large as Beckner's constant \eqref{Beckner}, i.e.\@ $C_p\geq {\bf B}_p$. One may be tempted to conjecture that the optimal constant in \eqref{nonlinHY} is actually $C_p={\bf B}_p$.

While these questions are left open by the present work, we are able to provide some further supporting evidence of their validity by considering the behaviour of the nonlinear Hausdorff--Young ratio for sufficiently small potentials.
The main result of this paper is the following theorem.

\begin{theorem}\label{thm:main}
 Let $p\in (1,2)$, $q=\frac p{p-1}$, and  ${\bf B}_p = p^{\frac1{2p}}q^{-\frac1{2q}}$. Let $A>0$ and $0<\lambda<1$.
 Then there exist  $\delta>0$ and $\varepsilon>0$, depending on $p,A,\lambda$, with the following property:
If $S\subset\mathbb{R}$ is a measurable subset  of Lebesgue measure $0<|S|<\infty$, and $f\colon\mathbb{R}\to\mathbb{C}$ is a measurable function satisfying
\begin{itemize}
\item[(i)] $\|f\|_{\textup{L}^1(\mathbb{R})}\leq\delta$,
\item[(ii)]  $\|f\|_{\textup{L}^1(S)}\geq\lambda\|f\|_{\textup{L}^1(\mathbb{R})}$,
\item[(iii)] $\|f\|_{\textup{L}^p(\mathbb{R})}^p\leq A |S|^{1-p} \|f\|_{\textup{L}^1(\mathbb{R})}$,
\end{itemize}
then
\begin{equation}\label{mainineq}
\big\| (\log |a(\xi)|^2)^\frac{1}{2} \big\|_{\textup{L}^q_\xi(\mathbb{R})}
\leq
\big( {\bf B}_p - \varepsilon\|f\|_{\textup{L}^1(\mathbb{R})}^2 \big) \|f\|_{\textup{L}^p(\mathbb{R})}.
\end{equation}
\end{theorem}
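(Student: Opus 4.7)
The plan is to combine the sharpened linear Hausdorff--Young inequality \eqref{Christ} with a perturbative analysis of the nonlinear transform around the linear one. The conservation law $|a(\xi)|^2=1+|b(\xi)|^2$ reduces the problem to controlling $\|(\log(1+|b|^2))^{1/2}\|_{\textup{L}^q}$. A Gr\"onwall-type argument for \eqref{ODE} gives $\|b\|_{\textup{L}^\infty}\le\|f\|_{\textup{L}^1}e^{\|f\|_{\textup{L}^1}}$, which is small by hypothesis~(i). Using the pointwise expansion $(\log(1+t))^{1/2}\le t^{1/2}-c\,t^{3/2}$ valid for $t$ in a neighborhood of zero, raising to the $q$-th power and integrating, I obtain the key ``linear-plus-correction'' estimate
\[
\big\|(\log|a|^2)^{1/2}\big\|_{\textup{L}^q}^q\le\|b\|_{\textup{L}^q}^q-c_1\|b\|_{\textup{L}^{q+2}}^{q+2}
\]
with some absolute $c_1>0$. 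The weaker bound $(\log|a|^2)^{1/2}\le|b|$ will be used in the ``easy'' case below.

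Next I would relate $b$ to $\widehat f$ using the integral identity $b(\xi)-\widehat f(\xi)=\iint_{y<x}f(x)\overline{f(y)}\,b(y,\xi)e^{-2\pi i(x-y)\xi}\,{\rm d}y\,{\rm d}x$ coming from one step of Picard iteration of \eqref{ODE}. A Minkowski-type estimate in $\xi$ combined with the Christ--Kiselev bound $\|b(y,\cdot)\|_{\textup{L}^q}\lesssim\|f\|_{\textup{L}^p}$ for each truncation yields the perturbative estimate $\|b-\widehat f\|_{\textup{L}^r}\lesssim\|f\|_{\textup{L}^1}^2\|f\|_{\textup{L}^p}$ for $r\in\{q,q+2\}$. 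Set $d:=\textup{dist}_p(f,\mathfrak G)/\|f\|_{\textup{L}^p}$. If $d\ge K\|f\|_{\textup{L}^1}$ for a sufficiently large constant $K=K(p,A,\lambda)$, Christ's inequality gives $\|\widehat f\|_{\textup{L}^q}\le({\bf B}_p-c_pK^2\|f\|_{\textup{L}^1}^2)\|f\|_{\textup{L}^p}$; choosing $K$ large enough that $c_pK^2$ dominates the perturbative constant, the chain $\|(\log|a|^2)^{1/2}\|_{\textup{L}^q}\le\|b\|_{\textup{L}^q}\le\|\widehat f\|_{\textup{L}^q}+C\|f\|_{\textup{L}^1}^2\|f\|_{\textup{L}^p}$ produces \eqref{mainineq} directly.

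The delicate case is $d<K\|f\|_{\textup{L}^1}$, in which there exists a Gaussian $G\in\mathfrak G$ with $\|f-G\|_{\textup{L}^p}\le K\|f\|_{\textup{L}^1}\|f\|_{\textup{L}^p}$ and Christ's improvement on its own is insufficient. Here I would harness the correction term $c_1\|b\|_{\textup{L}^{q+2}}^{q+2}$. An explicit computation for Gaussians of the form \eqref{Gaussian} yields the dimensionless identity $\|\widehat G\|_{\textup{L}^{q+2}}^{q+2}=\kappa(p)\|G\|_{\textup{L}^1}^2\|G\|_{\textup{L}^p}^q$; interpolating $\widehat f-\widehat G$ between the Hausdorff--Young bound $\|\widehat f-\widehat G\|_{\textup{L}^q}\lesssim\|f-G\|_{\textup{L}^p}$ and the $\textup{L}^\infty$ bound $\|\widehat f-\widehat G\|_{\textup{L}^\infty}\le\|f-G\|_{\textup{L}^1}$ transfers this identity into the lower bound $\|\widehat f\|_{\textup{L}^{q+2}}^{q+2}\gtrsim\|f\|_{\textup{L}^1}^2\|f\|_{\textup{L}^p}^q$. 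Using the perturbative bound once more gives the same lower bound for $\|b\|_{\textup{L}^{q+2}}^{q+2}$, and substituting into the first display together with $\|\widehat f\|_{\textup{L}^q}\le{\bf B}_p\|f\|_{\textup{L}^p}$ yields \eqref{mainineq} after extracting the $q$-th root.

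The principal obstacle I expect lies in this second case. The Gaussian identity transfers to $f$ only if $\|G\|_{\textup{L}^1}$ is comparable to $\|f\|_{\textup{L}^1}$; otherwise the factor $\|G\|_{\textup{L}^1}^2$ may be much smaller than $\|f\|_{\textup{L}^1}^2$ and the crucial lower bound collapses. This is precisely where hypotheses~(ii) and~(iii) enter: they pin down the natural scale $|S|\sim(\|f\|_{\textup{L}^1}/\|f\|_{\textup{L}^p})^q$ of $f$, and via $\textup{L}^p$-closeness this scale must be inherited by $G$, forcing $\|G\|_{\textup{L}^1}\sim\|f\|_{\textup{L}^1}$ with constants depending only on $p,A,\lambda$. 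The smallness hypothesis~(i) is then used to ensure the interpolated error $\|\widehat f-\widehat G\|_{\textup{L}^{q+2}}^{q+2}\lesssim(K\|f\|_{\textup{L}^1})^q\|f\|_{\textup{L}^1}^2\|f\|_{\textup{L}^p}^q$ is genuinely dominated by $\|\widehat G\|_{\textup{L}^{q+2}}^{q+2}\sim\|f\|_{\textup{L}^1}^2\|f\|_{\textup{L}^p}^q$, closing the argument.
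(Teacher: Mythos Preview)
Your two-case split and the far-from-Gaussian argument are close in spirit to the paper's (the paper uses the coarser threshold $\gamma\sim\|f\|_{\textup{L}^1}^{1/2}$ together with a Gr\"onwall estimate, whereas your sharper bound $\|b-\widehat f\|_{\textup{L}^q}\lesssim_p\|f\|_{\textup{L}^1}^2\|f\|_{\textup{L}^p}$ lets you take $\gamma\sim\|f\|_{\textup{L}^1}$), and the use of hypotheses (ii)--(iii) to force $\|G\|_{\textup{L}^1}\sim\|f\|_{\textup{L}^1}$ is exactly right.

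The gap is in the close-to-Gaussian case. After your first display you need to pass from $\|b\|_{\textup{L}^q}^q$ to $\|\widehat f\|_{\textup{L}^q}^q$, and the only tool you have is the crude perturbative bound $\|b-\widehat f\|_{\textup{L}^q}\le C_p\|f\|_{\textup{L}^1}^2\|f\|_{\textup{L}^p}$ with $C_p$ a Christ--Kiselev constant. This yields
\[
\|b\|_{\textup{L}^q}^q \le \|\widehat f\|_{\textup{L}^q}^q + q\,{\bf B}_p^{q-1}C_p\,\|f\|_{\textup{L}^1}^2\|f\|_{\textup{L}^p}^q + O(\|f\|_{\textup{L}^1}^4)\|f\|_{\textup{L}^p}^q,
\]
so the error competes at the \emph{same} order $\|f\|_{\textup{L}^1}^2\|f\|_{\textup{L}^p}^q$ as your gain $c_1\|b\|_{\textup{L}^{q+2}}^{q+2}\approx c_1\kappa(p)\|f\|_{\textup{L}^1}^2\|f\|_{\textup{L}^p}^q$. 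Closing the argument would require $c_1\kappa(p)>q\,{\bf B}_p^{q-1}C_p$, but $c_1\approx q/4$ and $\kappa(p)$ are fixed explicit constants while $C_p$ is an unspecified (and for known proofs, large) operator norm; there is no reason this inequality should hold. In effect, by estimating $b-\widehat f$ only in norm you have discarded the sign information contained in the quartic expansion of $\log|a(\xi)|^2$.

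The paper handles this by expanding $\log|a|^2$ \emph{directly} in $f$ rather than in $|b|^2$: using the Riccati equation for the reflection coefficient $r=b/a$ one obtains the exact identity $\log|a|^2=|\widehat f|^2-(\mathcal{Q}f)+(\mathcal{E}f)$, where $\mathcal{Q}f$ is an explicit quartic multilinear expression in $f$ and $\mathcal{E}f=O(\|f\|_{\textup{L}^1}^4)$ is genuinely higher order. After the numerical inequality the correction becomes $-\tfrac{q}{2}\mathcal{H}(f)$ with $\mathcal{H}(f)=\int(\mathcal{Q}f)|\widehat f|^{q-2}$, and the crux of the argument is the positivity $\mathcal{H}(G)\gtrsim\|G\|_{\textup{L}^1}^2\|G\|_{\textup{L}^p}^q$ for Gaussians, proved by writing $\mathcal{Q}G$ as the Fourier transform of a nonnegative function. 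If you expanded $\|b\|_{\textup{L}^q}^q-\|\widehat f\|_{\textup{L}^q}^q$ to quartic precision (instead of bounding it by Minkowski), you would recover exactly this cross term and be led to the same sign computation; as written, your proof lacks this step and does not close.
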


\noindent A few remarks may help to further orient the reader.
\begin{itemize}

\item
Inequality \eqref{mainineq} implies \eqref{nonlinHY} with an optimal constant $C_p={\bf B}_p$, but only within the restricted collection of functions considered in Theorem~\ref{thm:main}, which in particular depends on $p$.
That way the theorem does not claim uniform boundedness of the constants in \eqref{nonlinHY} for any particular family of functions.
It rather fixes the value of $p$ and shows that the nonlinear Hausdorff--Young inequality beats the linear one in the asymptotic regime when $\|f\|_{\textup{L}^1(\mathbb{R})}\to 0$.

\item
For each function $f\in \textup{L}^1(\mathbb{R})\cap\textup{L}^p(\mathbb{R})$ there exist parameters $A,\lambda$ and a subset $S\subset\mathbb{R}$ of finite positive measure for which the above conditions (ii) and (iii) are satisfied.
Moreover, by replacing $f$ with $cf$ for any $c\in\mathbb{C}$ with sufficiently small modulus, we obtain functions that additionally satisfy condition (i).
Then from inequality \eqref{mainineq} we see that the nonlinear Hausdorff--Young ratio for these functions falls strictly below ${\bf B}_p$.
However, we preferred to formulate a stronger result than the one dealing with sufficiently small multiples of a fixed potential. Theorem~\ref{thm:main} shows a concrete improvement for all functions in the collection determined by the exponent $p$ and the fixed parameters $A,\lambda$.

\item
Fix a constant $D>0$ and a set $S\subset\mathbb{R}$ of finite positive measure.
A collection of measurable functions $f:\mathbb{R}\to\mathbb{C}$ supported on $S$ and bounded in absolute value by $D|S|^{-1}$ clearly satisfies conditions (ii) and (iii) above.
Theorem~\ref{thm:main} applies to those functions $f$ from the latter collection that also satisfy the $\textup{L}^1$-bound (i).
This particular case of the theorem is already nontrivial.
Indeed, we begin the proof in \S \ref{sec:prelim} by reducing to the case when $f$ is compactly supported.

\item
Between the lines of the first part of the proof below (see \S \ref{sec:far}), one can easily obtain
\begin{equation}\label{altineq}
\big\| (\log |a(\xi)|^2)^\frac{1}{2} \big\|_{\textup{L}^q_\xi(\mathbb{R})}
\leq
{\bf B}_p \exp(\|f\|_{\textup{L}^1(\mathbb{R})}) \|f\|_{\textup{L}^p(\mathbb{R})}
\end{equation}
for each $f\in\textup{L}^1(\mathbb{R})$, i.e.\@ without any restrictions on the potential $f$ other than integrability.
Estimate \eqref{altineq} provides a cheap version of \eqref{mainineq}, with ${\bf B}_p - \varepsilon\|f\|_{\textup{L}^1(\mathbb{R})}^2$ replaced by ${\bf B}_p + O(\|f\|_{\textup{L}^1(\mathbb{R})})$, as $\|f\|_{\textup{L}^1(\mathbb{R})}\to0$, but the interest lies, of course, in obtaining the estimate with a negative sign.
However, \eqref{altineq} at least shows that the mere uniformity of the constants $C_p$ is trivial for potentials that are controlled in the $\textup{L}^1$-norm.

\item
Both the list of assumptions (i)--(iii) and inequality \eqref{mainineq} are invariant under $\textup{L}^1$-normalized dilations applied to $f$.
Consequently, the $\textup{L}^1$-norm cannot be replaced by the $\textup{L}^p$-norm on the right-hand side of \eqref{mainineq}.
If that were indeed possible, then one could consider $\textup{L}^1$-preserving scalings so that $\|f\|_{\textup{L}^p(\mathbb{R})}\to+\infty$ and the right-hand side would eventually become negative -- a clear contradiction.

\end{itemize}

Let us briefly comment on the proof of Theorem \ref{thm:main}, which spans over the next four sections.
We start by reducing the analysis to that of compactly supported functions, for which the nonlinear Fourier transform is defined more directly.
The upshot is then that for most choices of $f$ one can simply estimate the error arising from linearization, while for the remaining ones we verify that the nonlinear effect actually improves the estimate.
To implement this strategy, we set up a case distinction, depending on whether the function $f$ is far or close to the set of Gaussians, in an appropriate sense.
The former case is the subject of \S \ref{sec:far}, where we invoke Christ's sharpened Hausdorff--Young inequality \eqref{Christ} in order to absorb the error terms coming from linearization.
The latter case is the subject of \S \ref{sec:close}, where we use a perturbative argument to expand the functional in question around a suitable Gaussian that provides a good upper bound for $f$ in \emph{both} the $\textup{L}^p$- and the $\textup{L}^1$-norms.
We are naturally led to study a certain quartic operator $\mathcal{Q}$ and its quadrilinear variant $\Phi$, which enjoy various symmetries. The operator $\mathcal{Q}$ can in turn be pointwise dominated by a power of the maximally truncated Fourier transform, defined as follows:
\begin{equation}\label{MaxTruncFT}
(\mathcal{F}_\ast f)(\xi) = \sup_{I} \Big|\int_I f(x) e^{-2\pi i x\xi} \,{\rm d} x\Big|,
\end{equation}
where the supremum is taken over all intervals $I\subseteq\mathbb{R}$.
The classical Menshov--Paley--Zygmund inequality (see \cite{MS13}) states that, for every $p\in (1,2)$, there exists $M_p<\infty$ such that, for every $f\in \textup{L}^p(\mathbb{R})$,
\begin{equation}\label{MPZ}
\|\mathcal{F}_\ast f\|_{\textup{L}^q(\mathbb{R})} \leq M_p \|f\|_{\textup{L}^p(\mathbb{R})}.
\end{equation}
The argument makes crucial use of estimate \eqref{MPZ} in order to control the error terms, and to verify that the second order variation about the aforementioned Gaussian has the correct sign.
Proofs of several technical lemmata are deferred to \S \ref{sec:proofs}.

Numerical evidence seems to indicate that the inequality
\[ \big\| (\log |a(\xi)|^2)^\frac{1}{2} \big\|_{\textup{L}^q_\xi(\mathbb{R})} \leq \big\| \widehat{f} \big\|_{\textup{L}^q(\mathbb{R})} \]
does not hold in general, even for bounded, compactly supported potentials $f$.
If this is indeed the case, then presumably Theorem~\ref{thm:main} cannot be established just by regarding \eqref{mainineq} as a small perturbation of \eqref{sharpHY}.

\smallskip

\noindent {\bf Notation.}
When $x,y$ are real numbers, we write $x=O(y)$ or $x\lesssim y$ if there exists a finite absolute constant $C$ such that $|x|\leq C|y|$. If we want to make explicit the dependence of the constant $C$  on some parameter $\alpha$, we write $x=O_\alpha(y)$ or $x\lesssim_\alpha y$.
We also write $x\vee y=\max\{x,y\}$ and $x\wedge y=\min\{x,y\}$.
The real and imaginary parts of a complex number $z$ are denoted by $\textup{Re}(z)$ and $\textup{Im}(z)$.
The indicator function of a set $E\subseteq\mathbb{R}$ is denoted by $\mathbbm{1}_E$.
Throughout the paper it will be understood that all constants may depend on the admissible parameters $p, A, \lambda$.

\section{First reduction}\label{sec:prelim}

In the proof of Theorem \ref{thm:main}, there is no loss of generality in restricting attention to compactly supported functions.
Indeed, once this special case is settled, one may invoke inequality \eqref{mainineq} with $f$ replaced by its truncation $f\mathbbm{1}_{[-R,R]}$, and apply Fatou's lemma to the left-hand side of \eqref{mainineq} as $R\to+\infty$.
To do so, one should further observe that there exists $R_0>0$ such that the functions $\{f\mathbbm{1}_{[-R,R]}\}_{R\geq R_0}$ satisfy conditions (ii) and (iii) of Theorem \ref{thm:main} with new parameters $A^*:=2A$,  $\lambda^*:=\tfrac{\lambda}2$, and the same measurable subset $S\subset\mathbb{R}$.
Indeed, if $R_0>0$ is sufficiently large, then
\[ \|f\mathbbm{1}_{[-R,R]}\|_{\textup{L}^1(\mathbb{R})} \geq \frac{1}{2} \|f\|_{\textup{L}^1(\mathbb{R})},
\text{ and }
\|f\mathbbm{1}_{[-R,R]}\|_{\textup{L}^1(S)} \geq \frac{1}{2} \|f\|_{\textup{L}^1(S)} \geq \frac{\lambda}{2} \|f\mathbbm{1}_{[-R,R]}\|_{\textup{L}^1(\mathbb{R})}, \]
for every $R\geq R_0$.
As a consequence,
\[ \|f\mathbbm{1}_{[-R,R]}\|_{\textup{L}^p(\mathbb{R})}^p \leq \|f\|_{\textup{L}^p(\mathbb{R})}^p \leq A |S|^{1-p} \|f\|_{\textup{L}^1(\mathbb{R})} \leq 2A |S|^{1-p} \|f\mathbbm{1}_{[-R,R]}\|_{\textup{L}^1(\mathbb{R})}, \]
as claimed. Compact support will be a standing assumption for the rest of the argument.

\section{Far from Gaussians}\label{sec:far}

Let $f\colon\mathbb{R}\to\mathbb{C}$ satisfy the assumptions of Theorem \ref{thm:main}.
Our first task is to make precise what it means for $f$ to be far away from the set of Gaussians.
The following notion will be suitable for our purposes:
Assume that
\[ \textup{dist}_p(f,\mathfrak{G}) \geq \gamma \|f\|_{\textup{L}^p(\mathbb{R})}, \]
where $\gamma$ is shorthand notation for
\begin{equation}\label{gamma}
\gamma = \Big(\frac{8 {\bf B}_p \|f\|_{\textup{L}^1(\mathbb{R})}}{c_p}\Big)^{\frac12}.
\end{equation}
Here, ${\bf B}_p$ denotes Beckner's constant \eqref{Beckner} and $c_p$ is the constant promised by Christ's refinement \eqref{Christ}.
This precise choice of $\gamma$ will become clear as the proof unfolds.
In particular, note that $\gamma\lesssim\delta^\frac{1}{2}$.
Going back to the defining ODE \eqref{ODE}, it is straightforward to check that the functions $a$ and $b$ satisfy the integral equations
\begin{align*}
& a(x,\xi) = 1 + \int_{-\infty}^x \overline{f(t)} e^{2\pi i t \xi} b(t,\xi) \,{\rm d} t, \\
& b(x,\xi) = \int_{-\infty}^x f(t) e^{-2\pi i t \xi} a(t,\xi) \,{\rm d} t.
\end{align*}
Adding the two equations, the triangle inequality yields
\[ |b(x,\xi)| + |a(x,\xi) - 1| \leq \Big|\int_{-\infty}^x f(t) e^{-2\pi i t \xi} \,{\rm d} t\Big| + \int_{-\infty}^x |f(t)| \big(|b(t,\xi)| + |a(t,\xi)-1|\big) \,{\rm d} t. \]
Taking the $\textup{L}^q$-norm in $\xi$ and invoking Minkowski's integral inequality, we obtain
\[ \big\| |b(x,\xi)| + |a(x,\xi) - 1| \big\|_{\textup{L}^q_\xi(\mathbb{R})}
\leq \big\|\widehat{f\mathbbm{1}}_{(-\infty,x]}\big\|_{\textup{L}^q(\mathbb{R})}
+ \int_{-\infty}^x |f(t)| \big\| |b(t,\xi)| + |a(t,\xi) - 1| \big\|_{\textup{L}^q_\xi(\mathbb{R})} \,{\rm d} t. \]
To estimate the quantity $\|\widehat{f\mathbbm{1}}_{(-\infty,x]}\|_{\textup{L}^q(\mathbb{R})}$, we further split the analysis into two cases.
\smallskip

\noindent
\emph{Case 1.} $\|f\mathbbm{1}_{(x,+\infty)}\|_{\textup{L}^p(\mathbb{R})} < \frac{\gamma}2 \|f\|_{\textup{L}^p(\mathbb{R})}.$
In this case, for each $G\in\mathfrak{G}$ we have that
\[ \|f\mathbbm{1}_{(-\infty,x]}-G\|_{\textup{L}^p(\mathbb{R})}
\geq \|f-G\|_{\textup{L}^p(\mathbb{R})} - \|f\mathbbm{1}_{(x,+\infty)}\|_{\textup{L}^p(\mathbb{R})}
\geq \frac{\gamma}2 \|f\|_{\textup{L}^p(\mathbb{R})}. \]
It follows that
\[ \textup{dist}_p(f\mathbbm{1}_{(-\infty,x]},\mathfrak{G}) \geq \frac{\gamma}2 \|f\mathbbm{1}_{(-\infty,x]}\|_{\textup{L}^p(\mathbb{R})}, \]
and Christ's improved Hausdorff--Young inequality yields
\[ \big\|\widehat{f\mathbbm{1}}_{(-\infty,x]}\big\|_{\textup{L}^q(\mathbb{R})}
\leq \Big( {\bf B}_p - c_p\big(\frac{\gamma}2\big)^2 \Big) \|f\mathbbm{1}_{(-\infty,x]}\|_{\textup{L}^p(\mathbb{R})}
\leq \widetilde{\bf B}_p \|f\|_{\textup{L}^p(\mathbb{R})},\]
where $\widetilde{\bf B}_p$ is defined as
$\widetilde{\bf B}_p = {\bf B}_p - c_p(\frac{\gamma}2)^2.$
Note that $\widetilde{\bf B}_p$ is not really a constant, since $\gamma$ depends on the $\textup{L}^1$-norm of $f$.
\smallskip

\noindent
\emph{Case 2.} $\|f\mathbbm{1}_{(x,+\infty)}\|_{\textup{L}^p(\mathbb{R})} \geq \frac{\gamma}2 \|f\|_{\textup{L}^p(\mathbb{R})}.$
In this case, the sharp Hausdorff--Young inequality \eqref{sharpHY} yields
\[ \big\|\widehat{f\mathbbm{1}}_{(-\infty,x]}\big\|_{\textup{L}^q(\mathbb{R})}
\leq {\bf B}_p \|f\mathbbm{1}_{(-\infty,x]}\|_{\textup{L}^p(\mathbb{R})}
\leq {\bf B}_p \Big(1-\big(\frac{\gamma}{2}\big)^p\Big)^{\frac{1}{p}} \|f\|_{\textup{L}^p(\mathbb{R})}. \]
Bernoulli's inequality can then be invoked to verify that
\begin{equation}\label{ineqBp}
{\bf B}_p \Big(1-\big(\frac{\gamma}{2}\big)^p\Big)^{\frac{1}{p}} \leq {\bf B}_p \Big(1-\frac{1}{p}\big(\frac{\gamma}{2}\big)^p\Big) \leq \widetilde{\bf B}_p,
\end{equation}
provided $\delta$ is chosen to be sufficiently small.
Here we also use that $p<2$.
\smallskip

In both cases, we obtain
\[ \big\| |b(x,\xi)| + |a(x,\xi) - 1| \big\|_{\textup{L}^q_\xi(\mathbb{R})}
\leq \widetilde{\bf B}_p \|f\|_{\textup{L}^p(\mathbb{R})} + \int_{-\infty}^x |f(t)| \big\| |b(t,\xi)| + |a(t,\xi) - 1| \big\|_{\textup{L}^q_\xi(\mathbb{R})} \,{\rm d} t. \]
Gr\"{o}nwall's lemma then implies
\[ \big\| |b(x,\xi)| + |a(x,\xi) - 1| \big\|_{\textup{L}^q_\xi(\mathbb{R})} \leq \widetilde{\bf B}_p
\exp\Big({\int_{-\infty}^{x}|f(t)|\,{\rm d} t}\Big) \|f\|_{\textup{L}^p(\mathbb{R})}. \]
Letting $x\to+\infty$ and estimating $(\log|a|^2)^\frac{1}{2}\leq|b|$, we finally have that
\[ \big\| (\log |a(\xi)|^2)^\frac{1}{2} \big\|_{\textup{L}^q_\xi(\mathbb{R})}
\leq \widetilde{\bf B}_p \exp(\|f\|_{\textup{L}^1(\mathbb{R})}) \|f\|_{\textup{L}^p(\mathbb{R})}. \]
The obtained inequality shows that, in this case, the only loss in passing from the linear to the nonlinear setting amounts to the exponential factor, which tends to $1$ as $\|f\|_{\textup{L}^1(\mathbb{R})}\to 0$.
Recall the choice of $\gamma$ from \eqref{gamma}.
It remains to choose $\delta, \varepsilon>0$ small enough, so that \eqref{ineqBp} holds, and
\begin{align*}
\widetilde{\bf B}_p \exp(\|f\|_{\textup{L}^1(\mathbb{R})})
& = {\bf B}_p (1  - 2 \|f\|_{\textup{L}^1(\mathbb{R})}) \big(1 + \|f\|_{\textup{L}^1(\mathbb{R})} + O(\|f\|_{\textup{L}^1(\mathbb{R})}^2)\big) \\
& = {\bf B}_p \big(1 - \|f\|_{\textup{L}^1(\mathbb{R})} + O(\|f\|_{\textup{L}^1(\mathbb{R})}^2)\big)
\leq {\bf B}_p - \varepsilon\|f\|_{\textup{L}^1(\mathbb{R})}^2.
\end{align*}
Thus we have verified the desired inequality \eqref{mainineq} in the case when the function $f$ is far from the Gaussians.
We analyse the complementary situation in the next section, where an additional smallness condition will be imposed on $\delta, \varepsilon$.

\section{Close to Gaussians}\label{sec:close}

We are now working under the assumption
\[ \textup{dist}_p(f,\mathfrak{G}) < \gamma \|f\|_{\textup{L}^p(\mathbb{R})}, \]
where $\gamma$ was defined in \eqref{gamma}.
In particular, $\gamma\lesssim\|f\|_{\textup{L}^1(\mathbb{R})}^{\frac{1}{2}}$.
Thus, there exists a Gaussian $G\in\mathfrak{G}$, such that
\begin{equation}\label{Gapproxf}
\|f-G\|_{\textup{L}^p(\mathbb{R})} < \gamma \|f\|_{\textup{L}^p(\mathbb{R})}.
\end{equation}
This readily implies
\begin{equation}\label{approxLp}
\|f\|_{\textup{L}^p(\mathbb{R})} \leq 2\|G\|_{\textup{L}^p(\mathbb{R})},
\end{equation}
provided $\gamma<\frac12$.
Recall our working assumptions (i), (ii), and (iii) from the statement of Theorem \ref{thm:main}.
Under these conditions, using H\"{o}lder's inequality we may estimate
\begin{align*}
\|f-G\|_{\textup{L}^1(S)}
\leq |S|^{1-\frac1p}\|f-G\|_{\textup{L}^p(S)}
\lesssim |S|^{1-\frac1p}\|f\|_{\textup{L}^1(\mathbb{R})}^{\frac{1}{2}} \|f\|_{\textup{L}^p(\mathbb{R})}
\lesssim_A \|f\|_{\textup{L}^1(\mathbb{R})}^{\frac{1}{2}+\frac{1}{p}}
\lesssim_\lambda \delta^{\frac{1}{p}-\frac{1}{2}}\|f\|_{\textup{L}^1(S)}.
\end{align*}
Since $\frac{1}{p}-\frac{1}{2}>0$, it follows that
\[ \|f\|_{\textup{L}^1(S)} \leq 2\|G\|_{\textup{L}^1(S)}, \]
provided $\delta$ is small enough.
This readily implies
\begin{equation}\label{approxL1}
\|f\|_{\textup{L}^1(\mathbb{R})} \leq  \tfrac{2}{\lambda} \|G\|_{\textup{L}^1(\mathbb{R})}.
 \end{equation}
Inequalities \eqref{approxLp} and \eqref{approxL1} ensure that the Gaussian $G$ is a good upper bound for the function $f$ on the whole real line, both in the $\textup{L}^p$ and in the $\textup{L}^1$ senses.

We now proceed to derive the first nontrivial term in the expansion of the left-hand side of inequality \eqref{mainineq} for any $q>2$.
The first observation is that
\begin{equation}\label{relar}
\log (|a(x,\xi)|^2) = -\log(1-|r(x,\xi)|^2),
\end{equation}
where the reflection coefficient $r = \frac{b}{a}$ satisfies Riccati's differential equation
\begin{align*}
\frac{\partial r}{\partial x} (x,\xi) = f(x) e^{-2\pi i x \xi} - \overline{f(x)} e^{2\pi i x \xi} r(x,\xi)^2, \qquad r(-\infty,\xi) = 0.
\end{align*}
In other words, the reflection coefficient is given by the integral equation
\begin{equation}\label{intRicatti}
r(x,\xi) = \int_{-\infty}^x f(t) e^{-2\pi i t \xi} \,{\rm d} t - \int_{-\infty}^x \overline{f(t)} e^{2\pi i t \xi} r(t,\xi)^2 \,{\rm d} t.
\end{equation}
From identity \eqref{relar} we obtain
\[ \frac{\partial}{\partial x} \log (|a(x,\xi)|^2)
=\frac{2\,\textup{Re}\big(\overline{r(x,\xi)}\,\frac{\partial r}{\partial x}(x,\xi)\big)}{1-|r(x,\xi)|^2}
=2\,\textup{Re}\Big(f(x)e^{-2\pi i x \xi} \,\overline{r(x,\xi)}\Big), \]
i.e.
\begin{equation}\label{loguseful}
\log (|a(\xi)|^2) = 2\,\textup{Re} \int_\mathbb{R} f(x_1) e^{-2\pi i x_1 \xi} \,\overline{r(x_1,\xi)} \,\,{\rm d} x_1 .
\end{equation}
Using the integral equation \eqref{intRicatti} to substitute for $r(x_1,\xi)$ on the right-hand side of \eqref{loguseful},
\begin{align*}
\log (|a(\xi)|^2) & = 2\,\textup{Re} \int_{\mathbb{R}} \Big(\int_{-\infty}^{x_1} f(x_1)\overline{f(x_2)} e^{-2\pi i (x_1-x_2)\xi} \,\,{\rm d} x_2 \Big) \,\,{\rm d} x_1 \\
& \quad - 2\,\textup{Re}\int_{\mathbb{R}}\Big(\int_{-\infty}^{x_1} f(x_1)f(x_2) e^{-2\pi i (x_1+x_2)\xi}\,\overline{r(x_2,\xi)^2}\,\,{\rm d} x_2\Big)\,\,{\rm d} x_1.
\end{align*}
The first summand on the right-hand side can be recognized as
\[ \int\limits_{\{x_1>x_2\}} f(x_1)\overline{f(x_2)} e^{-2\pi i (x_1-x_2)\xi} \,\,{\rm d} x_1 \,{\rm d} x_2
+ \int\limits_{\{x_2>x_1\}} \overline{f(x_2)}f(x_1) e^{2\pi i (x_2-x_1)\xi} \,\,{\rm d} x_1 \,{\rm d} x_2
= |\widehat{f}(\xi)|^2. \]
Repeating this procedure once again, i.e.\@ substituting for $r(x_2,\xi)$ and symmetrizing in the variables $x_1$ and $x_2$, we conclude that
\begin{equation}\label{logexpansion}
\log (|a(\xi)|^2) = |\widehat{f}(\xi)|^2 - (\mathcal{Q} f)(\xi) + (\mathcal{E} f)(\xi),
\end{equation}
where $\mathcal{Q}$ is the quartic operator defined as
\[ (\mathcal{Q} f)(\xi)  = \textup{Re} \int\limits_{\{(x_1\wedge x_2)>(x_3\vee x_4)\}}\!\!
f(x_1)f(x_2) \overline{f(x_3)f(x_4)} \, e^{2\pi i (-x_1-x_2+x_3+x_4) \xi} \, \,{\rm d} x_1 \,{\rm d} x_2 \,{\rm d} x_3 \,{\rm d} x_4 \]
and $\mathcal{E}$ is the nonlinear operator given by
\begin{align*}
& (\mathcal{E} f)(\xi)
= 2\,\textup{Re} \!\!\!\!\!\int\limits_{\{(x_1\wedge x_2)>(x_3\vee x_4)\}}\!\!\!\!\!\!\!
f(x_1)f(x_2)\overline{f(x_3)}f(x_4) \, e^{2\pi i (-x_1-x_2+x_3-x_4) \xi} \,\overline{r(x_4,\xi)^2} \, \,{\rm d} x_1 \,{\rm d} x_2 \,{\rm d} x_3 \,{\rm d} x_4 \\
& - \textup{Re} \!\!\!\!\int\limits_{\{(x_1\wedge x_2)>(x_3\vee x_4)\}}\!\!\!\!\!\!
f(x_1)f(x_2)f(x_3)f(x_4) \, e^{2\pi i (-x_1-x_2-x_3-x_4) \xi} \,\overline{r(x_3,\xi)^2 r(x_4,\xi)^2} \, \,{\rm d} x_1 \,{\rm d} x_2 \,{\rm d} x_3 \,{\rm d} x_4.
\end{align*}

We now  use the first numerical inequality established in Lemma \ref{lm:inequalities} below.
Substituting
\[ u=|\widehat{f}(\xi)|^2\;\text{ and }\; v= -(\mathcal{Q} f)(\xi) + (\mathcal{E} f)(\xi) \]
into the inequality from part (a) of Lemma \ref{lm:inequalities}, and then integrating in $\xi$,
we conclude from \eqref{logexpansion} that
\begin{equation}\label{logexpansionLq}
\big\| (\log |a(\xi)|^2)^\frac{1}{2} \big\|_{\textup{L}^q_\xi(\mathbb{R})}^q
\leq \big\|\widehat{f}\big\|_{\textup{L}^q(\mathbb{R})}^q - \frac{q}{2}\mathcal{H}(f)+ \mathcal{R}(f),
\end{equation}
where the second term is given by
\[ \mathcal{H}(f) = \int_{\mathbb{R}} (\mathcal{Q} f)(\xi) |\widehat{f}(\xi)|^{q-2} \,{\rm d}\xi, \]
and the remainder $\mathcal{R}(f)$ is bounded by a linear combination (with coefficients depending only on $q$) of integrals
\begin{equation}\label{eq:integrals1}
\int_{\mathbb{R}} |(\mathcal{Q} f)(\xi)|^{\frac{q}{2}} \,{\rm d}\xi, \quad
\int_{\mathbb{R}} |(\mathcal{E} f)(\xi)|^{\frac{q}{2}} \,{\rm d}\xi, \quad
\int_{\mathbb{R}} |(\mathcal{E} f)(\xi)| |\widehat{f}(\xi)|^{q-2} \,{\rm d}\xi,
\end{equation}
and, if $q>4$,  also
\begin{equation}\label{eq:integrals2}
\int_{\mathbb{R}} |(\mathcal{Q} f)(\xi)|^2 |\widehat{f}(\xi)|^{q-4} \,{\rm d}\xi, \quad
\int_{\mathbb{R}} |(\mathcal{E} f)(\xi)|^2 |\widehat{f}(\xi)|^{q-4} \,{\rm d}\xi.
\end{equation}

Approximating the function $f$ with the Gaussian $G$ as discussed at the beginning of this section, we are thus reduced to showing
that $\mathcal{H}(G)$ is a large enough positive quantity,
that $\mathcal{H}(G)$ provides a good approximation for $\mathcal{H}(f)$,
and that the remainder term $\mathcal{R}(f)$ is appropriately small.
This is accomplished via the following sequence of lemmata, whose proofs are deferred to the next section in order not to obscure the main line of reasoning.
All of them hold under the ongoing assumption that $f$ satisfies the hypotheses of Theorem \ref{thm:main}, and that $G\in\mathfrak{G}$ approximates $f$ in the sense of \eqref{Gapproxf}.

The first lemma shows that the quantity $\mathcal{H}(G)$ is not too small.

\begin{lemma}\label{lm:Hbounds}
\begin{equation*}
\mathcal{H}(G) \gtrsim \|G\|_{\textup{L}^1(\mathbb{R})}^2 \|G\|_{\textup{L}^p(\mathbb{R})}^q.
\end{equation*}
\end{lemma}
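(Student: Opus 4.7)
The plan is to reduce, via the joint symmetries of $\mathcal{Q}$ and of the Fourier transform on $\mathfrak{G}$, to verifying strict positivity of $\mathcal{H}$ at a single reference Gaussian, which can then be checked by direct computation. First I would observe that the ratio $\mathcal{H}(G)/\bigl(\|G\|_{\textup{L}^1(\mathbb{R})}^2\|G\|_{\textup{L}^p(\mathbb{R})}^q\bigr)$ is invariant under all transformations that map $\mathfrak{G}$ into itself. For $G\mapsto e^{i\theta}G$ the four complex phases in the quartic integrand defining $\mathcal{Q}G$ multiply to $1$, and all other quantities are manifestly invariant. Translation $G\mapsto G(\cdot-x_0)$ leaves $|\widehat{G}|$ and, via the substitution $x_i\mapsto x_i+x_0$, also $\mathcal{Q}G$ invariant. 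Modulation $G\mapsto e^{2\pi i x\xi_0}G$ shifts both $\widehat{G}$ and $\mathcal{Q}G$ by $\xi_0$ in frequency. For the $\textup{L}^1$-normalized dilation $G_\lambda(x)=\lambda^{-1}G(x/\lambda)$, direct substitution yields $(\mathcal{Q}G_\lambda)(\xi)=(\mathcal{Q}G)(\lambda\xi)$ and $\widehat{G_\lambda}(\xi)=\widehat{G}(\lambda\xi)$, hence $\mathcal{H}(G_\lambda)=\lambda^{-1}\mathcal{H}(G)$; this matches the identity $\|G_\lambda\|_{\textup{L}^1(\mathbb{R})}^2\|G_\lambda\|_{\textup{L}^p(\mathbb{R})}^q=\lambda^{-1}\|G\|_{\textup{L}^1(\mathbb{R})}^2\|G\|_{\textup{L}^p(\mathbb{R})}^q$, which uses the elementary identity $q(1-p)/p=-1$. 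A real positive scalar $c$ multiplies both sides of the target inequality by $c^{q+2}$. Completing the square in the exponent $-\alpha x^2+vx$ and composing these operations transforms any $G\in\mathfrak{G}$ into the standard Gaussian $G_0(x)=e^{-\pi x^2}$, so it suffices to show $\mathcal{H}(G_0)\gtrsim_p 1$.

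Next I would compute $\mathcal{H}(G_0)$ explicitly. Since $|\widehat{G_0}(\xi)|^{q-2}=e^{-\pi(q-2)\xi^2}$, Fubini combined with the classical Gaussian identity $\int_{\mathbb{R}} e^{2\pi i\eta\xi}e^{-\pi(q-2)\xi^2}\,{\rm d}\xi=(q-2)^{-1/2}e^{-\pi\eta^2/(q-2)}$, applied with $\eta=-x_1-x_2+x_3+x_4$, gives
\[ \mathcal{H}(G_0)=\frac{1}{\sqrt{q-2}}\int_{\{(x_1\wedge x_2)>(x_3\vee x_4)\}}e^{-\pi(x_1^2+x_2^2+x_3^2+x_4^2)}\,e^{-\pi(x_1+x_2-x_3-x_4)^2/(q-2)}\,{\rm d} x_1\,{\rm d} x_2\,{\rm d} x_3\,{\rm d} x_4. \]
The integrand is strictly positive throughout the region $\{(x_1\wedge x_2)>(x_3\vee x_4)\}$, which has positive Lebesgue measure in $\mathbb{R}^4$. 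Therefore $\mathcal{H}(G_0)$ is a positive constant depending only on $p$, and the invariance of the ratio established above delivers the claimed lower bound.

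The argument is mostly bookkeeping rather than a genuine obstacle; the one place demanding care is the dilation calculation, where the factor $\lambda^{-1}$ gained from the change of variable in $\xi$ in $\mathcal{H}(G_\lambda)$ must match exactly the corresponding factor in $\|G_\lambda\|_{\textup{L}^p(\mathbb{R})}^q$ via the identity $q=p/(p-1)$.
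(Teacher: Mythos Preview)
Your proposal is correct and follows essentially the same approach as the paper: reduce via the symmetries of the ratio to the standard Gaussian $G_0(x)=e^{-\pi x^2}$, then verify $\mathcal{H}(G_0)>0$. The only cosmetic difference is in that last step: the paper writes $\mathcal{Q}G_0=\widehat{\varphi}$ and $|\widehat{G_0}|^{q-2}=\widehat{\psi}$ and appeals to Parseval with $\varphi\geq 0$, $\psi>0$, whereas you integrate in $\xi$ directly via the Gaussian Fourier formula; these are two phrasings of the same computation, and your explicit formula makes the strict positivity transparent.
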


The second lemma shows that $\mathcal{H}(G)$ provides a good approximation for $\mathcal{H}(f)$.
\begin{lemma}\label{lm:diffHbounds}
\begin{equation*}
|\mathcal{H}(f)-\mathcal{H}(G)|
\lesssim \gamma^{(q-2)\wedge 1} \|G\|_{\textup{L}^1(\mathbb{R})}^2 \|G\|_{\textup{L}^p(\mathbb{R})}^q.
\end{equation*}
\end{lemma}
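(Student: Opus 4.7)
My approach is to write $\mathcal{H}(f)-\mathcal{H}(G)=I+II$ where
\[ I := \int_{\mathbb{R}} \big[(\mathcal{Q} f)(\xi)-(\mathcal{Q} G)(\xi)\big]\,|\widehat{f}(\xi)|^{q-2}\,{\rm d}\xi, \quad II := \int_{\mathbb{R}} (\mathcal{Q} G)(\xi)\,\big[|\widehat{f}(\xi)|^{q-2}-|\widehat{G}(\xi)|^{q-2}\big]\,{\rm d}\xi, \]
and treat the two pieces separately. Setting $h:=f-G$, the hypotheses \eqref{Gapproxf}, \eqref{approxLp}, \eqref{approxL1} give $\|h\|_{\textup{L}^p(\mathbb{R})}\lesssim\gamma\,\|G\|_{\textup{L}^p(\mathbb{R})}$ while all of $\|f\|_{\textup{L}^1(\mathbb{R})}$, $\|G\|_{\textup{L}^1(\mathbb{R})}$, $\|h\|_{\textup{L}^1(\mathbb{R})}$ are comparable. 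Let $\Phi$ denote the quadrilinear polarization of $\mathcal{Q}$, so that $\mathcal{Q} f = \textup{Re}\,\Phi(f,f,f,f)$. By integrating out either the inner pair $x_3,x_4 \in (-\infty,x_1\wedge x_2)$ or the outer pair $x_1,x_2 \in (x_3\vee x_4,+\infty)$ first and recognizing the resulting integrals as truncations of Fourier integrals over half-lines, one obtains two pointwise bounds
\[ |\Phi(g_1,g_2,g_3,g_4)(\xi)|\leq\|g_1\|_{\textup{L}^1(\mathbb{R})}\|g_2\|_{\textup{L}^1(\mathbb{R})}(\mathcal{F}_\ast g_3)(\xi)(\mathcal{F}_\ast g_4)(\xi), \]
together with the symmetric inequality obtained by swapping the roles of the pairs $(1,2)$ and $(3,4)$.

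To bound $I$, I telescope
\[ \Phi(f,f,f,f)-\Phi(G,G,G,G)=\Phi(h,f,f,f)+\Phi(G,h,f,f)+\Phi(G,G,h,f)+\Phi(G,G,G,h), \]
and for each summand pick the pointwise bound on $\Phi$ that places $h$ in an $\mathcal{F}_\ast$-slot rather than an $\textup{L}^1$-slot. Hölder's inequality with exponents $(q,q,q/(q-2))$, followed by \eqref{MPZ} applied to the two $\mathcal{F}_\ast$ factors and by \eqref{sharpHY} applied to $\widehat{f}$, yields
\[ |I|\lesssim \|G\|_{\textup{L}^1(\mathbb{R})}^2\,\|G\|_{\textup{L}^p(\mathbb{R})}^{q-1}\,\|h\|_{\textup{L}^p(\mathbb{R})}\lesssim\gamma\,\|G\|_{\textup{L}^1(\mathbb{R})}^2\|G\|_{\textup{L}^p(\mathbb{R})}^q. \]

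To bound $II$, I combine the pointwise estimate $|(\mathcal{Q} G)(\xi)|\leq\|G\|_{\textup{L}^1(\mathbb{R})}^2(\mathcal{F}_\ast G)(\xi)^2$ with the elementary numerical inequality
\[ \big||\widehat{f}(\xi)|^{q-2}-|\widehat{G}(\xi)|^{q-2}\big|\lesssim_q\begin{cases} |\widehat{h}(\xi)|^{q-2}, & 2<q\leq 3, \\ (|\widehat{f}(\xi)|+|\widehat{G}(\xi)|)^{q-3}|\widehat{h}(\xi)|, & q>3. \end{cases} \]
In the case $2<q\leq 3$, Hölder with exponents $(q/2,q/(q-2))$ together with \eqref{MPZ} and \eqref{sharpHY} yields $|II|\lesssim\gamma^{q-2}\|G\|_{\textup{L}^1(\mathbb{R})}^2\|G\|_{\textup{L}^p(\mathbb{R})}^q$; in the case $q>3$, Hölder with exponents $(q/2,q/(q-3),q)$ gives $|II|\lesssim\gamma\,\|G\|_{\textup{L}^1(\mathbb{R})}^2\|G\|_{\textup{L}^p(\mathbb{R})}^q$. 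Summing the two contributions produces the exponent $(q-2)\wedge 1$ claimed in the lemma.

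The main bookkeeping obstacle lies in handling $I$: one must check that in each of the four telescoping terms the available pointwise estimates on $\Phi$ permit $h$ to be slotted into an $\mathcal{F}_\ast$-position, so that the small factor $\|\mathcal{F}_\ast h\|_{\textup{L}^q(\mathbb{R})}\lesssim\gamma\,\|G\|_{\textup{L}^p(\mathbb{R})}$ is actually gained rather than squandered on a crude $\textup{L}^1$-control of $h$. Once the two versions of the pointwise bound for $\Phi$ are verified, the four summands are treated uniformly, and the remainder of the argument is a routine application of Hölder's inequality combined with the Menshov--Paley--Zygmund estimate \eqref{MPZ} and the sharp Hausdorff--Young inequality \eqref{sharpHY}.
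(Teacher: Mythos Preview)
Your proof is correct and follows essentially the same approach as the paper's: split $\mathcal{H}(f)-\mathcal{H}(G)$ into a piece carrying $\mathcal{Q}f-\mathcal{Q}G$ and a piece carrying $|\widehat{f}|^{q-2}-|\widehat{G}|^{q-2}$, telescope the multilinear form so that $h$ always lands in an $\mathcal{F}_\ast$-slot, and finish with H\"older, the Menshov--Paley--Zygmund inequality \eqref{MPZ}, and Hausdorff--Young. The only cosmetic difference is that the paper pairs $\mathcal{Q}f$ (rather than $\mathcal{Q}G$) with the power difference and pairs $\mathcal{Q}f-\mathcal{Q}G$ with $|\widehat{G}|^{q-2}$ (rather than $|\widehat{f}|^{q-2}$); this has no effect on the argument. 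One small remark: your claim that $\|h\|_{\textup{L}^1(\mathbb{R})}$ is comparable to $\|f\|_{\textup{L}^1(\mathbb{R})}$ and $\|G\|_{\textup{L}^1(\mathbb{R})}$ is neither justified by the hypotheses nor actually used anywhere in your proof, so you may simply drop it.
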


The third lemma shows smallness of the remainder term $\mathcal{R}(f)$.
\begin{lemma}\label{lm:Rbounds}
\begin{equation*}
|\mathcal{R}(f)| \lesssim \delta^{(q-2)\wedge 2} \|f\|_{\textup{L}^1(\mathbb{R})}^2 \|f\|_{\textup{L}^p(\mathbb{R})}^q.
\end{equation*}
\end{lemma}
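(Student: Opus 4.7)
The plan is to derive pointwise estimates for $\mathcal{Q}f$ and $\mathcal{E}f$ in terms of the maximally truncated Fourier transform $\mathcal{F}_\ast f$ defined in (\ref{MaxTruncFT}), and then dispatch each of the five integrals in (\ref{eq:integrals1})--(\ref{eq:integrals2}) by combining the Menshov--Paley--Zygmund inequality (\ref{MPZ}) with the trivial pointwise bound $\mathcal{F}_\ast f(\xi)\le\|f\|_{\textup{L}^1(\mathbb{R})}\le\delta$. The main obstacle is to show
\[
|(\mathcal{Q}f)(\xi)|\le\|f\|_{\textup{L}^1(\mathbb{R})}^2\,\bigl(\mathcal{F}_\ast f(\xi)\bigr)^{2},
\]
with the factor $\|f\|_{\textup{L}^1(\mathbb{R})}^2$ in the correct place. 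To achieve this, I freeze $(x_1,x_2)$: the constraint $(x_3\vee x_4)<(x_1\wedge x_2)=:s$ decouples $(x_3,x_4)$ into a pair of independent integrations over $(-\infty,s)$, and the corresponding piece of the integrand factors as $\overline{\bigl(\int_{-\infty}^{s}f(x)e^{-2\pi ix\xi}\,{\rm d} x\bigr)^{2}}$, whose modulus is at most $(\mathcal{F}_\ast f(\xi))^2$. Pulling this factor outside and absorbing $|f(x_1)f(x_2)|$ into $\|f\|_{\textup{L}^1(\mathbb{R})}^2$ then completes the step; the ordered region of integration is what makes this work.

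Next, I will obtain the uniform bound $|r(x,\xi)|\lesssim \mathcal{F}_\ast f(\xi)$. Applying the triangle inequality to (\ref{intRicatti}) and using the a priori estimate $|r|<1$ (which lets us replace $|r|^2$ by $|r|$ inside the integral), one gets
\[
|r(x,\xi)|\le \mathcal{F}_\ast f(\xi)+\int_{-\infty}^x|f(t)|\,|r(t,\xi)|\,{\rm d} t,
\]
and Gr\"onwall's lemma then yields $|r(x,\xi)|\le e^{\|f\|_{\textup{L}^1(\mathbb{R})}}\mathcal{F}_\ast f(\xi)\lesssim \mathcal{F}_\ast f(\xi)$, the exponential being harmless because $\|f\|_{\textup{L}^1(\mathbb{R})}\le\delta<1$. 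Substituting this control for the $\overline{r^2}$ and $\overline{r^2 r^2}$ factors appearing in the two constituents of $\mathcal{E}f$, and performing the analogous outer integration (now freezing $(x_3,x_4)$ so that $(x_1,x_2)$ ranges over $(x_3\vee x_4,\infty)^2$), I obtain $|(\mathcal{E}f)(\xi)|\lesssim \|f\|_{\textup{L}^1(\mathbb{R})}^2\,(\mathcal{F}_\ast f(\xi))^4$; the second constituent is strictly smaller and does not dictate the final estimate.

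To conclude, I insert these pointwise bounds, together with $|\widehat{f}|\le\mathcal{F}_\ast f$ and the $\textup{L}^\infty$ bound $\mathcal{F}_\ast f\le\|f\|_{\textup{L}^1(\mathbb{R})}$, into each of the five integrals, and apply (\ref{MPZ}) to the single remaining copy of $\int(\mathcal{F}_\ast f)^q\,{\rm d}\xi$. The critical term is
\[
\int_{\mathbb{R}}|(\mathcal{Q}f)(\xi)|^{q/2}\,{\rm d}\xi\lesssim\|f\|_{\textup{L}^1(\mathbb{R})}^q\int_{\mathbb{R}}\bigl(\mathcal{F}_\ast f(\xi)\bigr)^q\,{\rm d}\xi\lesssim\|f\|_{\textup{L}^1(\mathbb{R})}^q\,\|f\|_{\textup{L}^p(\mathbb{R})}^q,
\]
and factoring $\|f\|_{\textup{L}^1(\mathbb{R})}^q=\|f\|_{\textup{L}^1(\mathbb{R})}^2\,\|f\|_{\textup{L}^1(\mathbb{R})}^{q-2}\le\delta^{(q-2)\wedge 2}\,\|f\|_{\textup{L}^1(\mathbb{R})}^2$ (using $\delta<1$) yields exactly the advertised bound. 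The four remaining integrals carry strictly larger powers of $\|f\|_{\textup{L}^1(\mathbb{R})}$ after the same interpolation $(\mathcal{F}_\ast f)^k\le \|f\|_{\textup{L}^1(\mathbb{R})}^{k-q}(\mathcal{F}_\ast f)^q$ is used to reduce every exponent back to $q$, so they contribute strictly smaller terms, and the lemma follows.
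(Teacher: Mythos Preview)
Your argument is correct and follows essentially the same route as the paper: pointwise control of $\mathcal{Q}f$ and $\mathcal{E}f$ by powers of $\mathcal{F}_\ast f$, then the Menshov--Paley--Zygmund inequality. The only cosmetic differences are that the paper bounds $|r|\le 2\|f\|_{\textup{L}^1}$ directly from \eqref{intRicatti} (rather than via Gr\"onwall), obtaining $|(\mathcal{E}f)(\xi)|\lesssim \|f\|_{\textup{L}^1}^4(\mathcal{F}_\ast f)^2(\xi)$ instead of your $\|f\|_{\textup{L}^1}^2(\mathcal{F}_\ast f)^4(\xi)$, and then separates the five integrals via H\"older rather than the pointwise bound $\mathcal{F}_\ast f\le\|f\|_{\textup{L}^1}$; either way the same powers of $\|f\|_{\textup{L}^1}$ emerge.

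One small imprecision: your final sentence asserts that the four remaining integrals ``carry strictly larger powers of $\|f\|_{\textup{L}^1}$'' than the first. This is not literally true when $q>4$, since then $\int|\mathcal{Q}f|^2|\widehat f|^{q-4}$ and $\int|\mathcal{E}f||\widehat f|^{q-2}$ produce the power $\|f\|_{\textup{L}^1}^4$, which is \emph{smaller} than the $\|f\|_{\textup{L}^1}^q$ coming from the first integral. This does no harm, however: every one of the five integrals carries at least $\|f\|_{\textup{L}^1}^4$, and $4-2=2\ge(q-2)\wedge 2$, so each is bounded by $\delta^{(q-2)\wedge 2}\|f\|_{\textup{L}^1}^2\|f\|_{\textup{L}^p}^q$ as required.
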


We are now in a position to finish the proof of the theorem.
If the parameter $\delta$ is chosen to be small enough, then so is $\gamma$, and the three lemmata combine with bounds \eqref{approxLp}, \eqref{approxL1} to yield
\[ \frac{q}{2} |\mathcal{H}(f)-\mathcal{H}(G)|+|\mathcal{R}(f)|\leq \frac{q}{4} \mathcal{H}(G). \]
If $\varepsilon>0$ is small enough, then this inequality together with \eqref{logexpansionLq}, Lemma~\ref{lm:Hbounds}, estimates \eqref{approxLp} and \eqref{approxL1}, and the sharp Hausdorff--Young inequality \eqref{sharpHY} imply
\begin{align*}
\big\| (\log |a(\xi)|^2)^\frac{1}{2} \big\|_{\textup{L}^q_\xi(\mathbb{R})}^q
& \leq \big\|\widehat{f}\big\|_{\textup{L}^q(\mathbb{R})}^q - \frac{q}{4}\mathcal{H}(G) \\
& \leq \big\|\widehat{f}\big\|_{\textup{L}^q(\mathbb{R})}^q - \varepsilon q {\bf B}_p^{q-1} \|f\|_{\textup{L}^1(\mathbb{R})}^2 \|f\|_{\textup{L}^p(\mathbb{R})}^q\\
& \leq \big( {\bf B}_p^q - \varepsilon q {\bf B}_p^{q-1}\|f\|_{\textup{L}^1(\mathbb{R})}^2 \big) \|f\|_{\textup{L}^p(\mathbb{R})}^q.
\end{align*}
One last application of Bernoulli's inequality finally yields
\[ \big( {\bf B}_p^q - \varepsilon q {\bf B}_p^{q-1}\|f\|_{\textup{L}^1(\mathbb{R})}^2 \big)^{\frac1q}
\leq {\bf B}_p - \varepsilon\|f\|_{\textup{L}^1(\mathbb{R})}^2, \]
as desired.
This completes the proof of Theorem \ref{thm:main} modulo the verification of the lemmata, which is the content of the next section.

\section{Proofs of lemmata}\label{sec:proofs}

We start with some elementary numerical inequalities.
\begin{lemma}\label{lm:inequalities}
Given an exponent $2<q<\infty$, the following inequalities hold.
\begin{itemize}
\item[(a)]
For $u\geq 0$ and $v\in\mathbb{R}$ one has
\[ |u+v|^{\frac{q}{2}} \leq
\left\{\begin{array}{ll}
u^{\frac{q}{2}} + \frac{q}{2} v u^{\frac{q}{2}-1} + D_q |v|^{\frac{q}{2}} & \text{if } 2<q\leq 4, \\
u^{\frac{q}{2}} + \frac{q}{2} v u^{\frac{q}{2}-1} + D_q |v|^{\frac{q}{2}} + D_q |v|^{2} u^{\frac{q}{2}-2} & \text{if } q>4,
\end{array}\right.
\]
with some finite constant $D_q>0$ depending only on $q$.
\item[(b)]
For $u\geq 0$ and $v\in\mathbb{R}$ one has
\[ \big| |u+v|^{q-2} - u^{q-2} \big| \leq
E_q \left\{\begin{array}{ll}
|v|^{q-2} & \text{if } 2<q\leq 3, \\
|v|^{q-2} + |v| u^{q-3} & \text{if } q>3,
\end{array}\right. \]
with some finite constant $E_q>0$ depending only on $q$.
\end{itemize}
\end{lemma}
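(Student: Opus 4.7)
The plan is to treat both inequalities as one-variable statements obtained by homogenization, and then to verify them by Taylor's theorem in part (a) and by classical power-function estimates in part (b).

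For part (a), the case $u=0$ reduces the claim to $|v|^{q/2} \leq D_q |v|^{q/2}$, which holds as long as $D_q \geq 1$. Assuming $u>0$, I would factor $u^{q/2}$ out of the left-hand side and substitute $t = v/u$, reducing the claim to
\begin{equation*}
\phi(t) := |1+t|^{q/2} - 1 - \tfrac{q}{2}\, t \;\leq\; D_q\bigl(|t|^{q/2} + t^{2}\mathbbm{1}_{q>4}\bigr), \qquad t\in\mathbb{R}.
\end{equation*}
Convexity of $s\mapsto |s|^{q/2}$ for $q/2>1$ gives $\phi(t) \geq 0$, so only an upper bound is needed. I would split according to whether $|t|\leq 1/2$ or $|t|\geq 1/2$. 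For $|t|\leq 1/2$, the function $(1+t)^{q/2}$ is smooth with second derivative $\frac{q}{2}(\frac{q}{2}-1)(1+s)^{q/2-2}$ uniformly bounded on $[-1/2,1/2]$, so second-order Taylor with remainder yields $\phi(t) \lesssim_q t^2$. For $|t|\geq 1/2$, the crude bound $|1+t|^{q/2}\leq (1+|t|)^{q/2}\lesssim_q |t|^{q/2}$ combined with $|t|\lesssim_q |t|^{q/2}$ (valid here since $q/2>1$) gives $\phi(t)\lesssim_q |t|^{q/2}$. The two stated forms of the bound now come from comparing $t^2$ and $|t|^{q/2}$ on $|t|\leq 1$: if $q\leq 4$ then $t^2\leq |t|^{q/2}$ there, so a single $|t|^{q/2}$ term absorbs everything; if $q>4$ one must retain a separate quadratic term for the small-$|t|$ regime. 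Undoing the substitution $t=v/u$ and multiplying through by $u^{q/2}$ recovers the claim.

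For part (b), the inequality follows from two classical estimates for $s\mapsto s^{q-2}$ on $[0,\infty)$, applied with $a=|u+v|$ and $b=u$, noting that $|a-b| = \bigl||u+v|-u\bigr| \leq |v|$. When $q-2\in(0,1]$, i.e.\ $2<q\leq 3$, subadditivity of $s\mapsto s^{q-2}$ gives $|a^{q-2}-b^{q-2}| \leq |a-b|^{q-2} \leq |v|^{q-2}$. When $q-2>1$, the mean value theorem yields $|a^{q-2}-b^{q-2}|\lesssim_q |a-b|\,(a\vee b)^{q-3}$, and inserting the bound $(u+|v|)^{q-3}\lesssim_q u^{q-3}+|v|^{q-3}$ produces the two-term estimate $|v|\,u^{q-3}+|v|^{q-2}$.

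The main obstacle, which is really just careful bookkeeping, is the case distinction in part (a) that forces the extra $|v|^{2}u^{q/2-2}$ term when $q>4$: once the regimes $|t|\leq 1/2$ and $|t|\geq 1/2$ are properly separated, both inequalities are elementary and the constants $D_q$, $E_q$ can be read off from the arguments above.
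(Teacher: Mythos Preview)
Your proof is correct and follows essentially the same route as the paper: in part (a) both arguments homogenize via $t=v/u$ to reduce to a one-variable inequality, and the verification is elementary calculus in either case (the paper shows the quotient $\phi(t)/(|t|^{q/2}+t^2\mathbbm{1}_{q>4})$ is continuous with finite limits at $0$ and $\pm\infty$, while you split at $|t|=1/2$ and use Taylor's remainder and crude bounds). For part (b) the paper repeats the quotient-and-limits trick after the same substitution, whereas you bypass the substitution and appeal directly to subadditivity of $s\mapsto s^{q-2}$ for $q-2\le 1$ and the mean value theorem for $q-2>1$; both are standard and equally short.
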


\begin{proof}
(a) No generality is lost in assuming that $u\neq 0$.
We can then divide both sides of the inequality by $u^{\frac{q}{2}}$, and substituting $t=\frac vu$ we are left with checking that
\[ |1+t|^{\frac{q}{2}} - 1 - \frac{q}{2}t \lesssim_q
\left\{\begin{array}{ll}
|t|^{\frac{q}{2}} & \text{if } 2<q\leq 4, \\
|t|^{\frac{q}{2}} + t^{2} & \text{if } q>4.
\end{array}\right. \]
If $Q(t)$ denotes the quotient of the two sides of the inequality,
\[ Q(t)=
\left\{\begin{array}{ll}
\frac{|1+t|^{\frac{q}{2}} - 1 - \frac{q}{2}t}{|t|^{\frac{q}{2}}} & \text{if } 2<q\leq4, \\
\frac{|1+t|^{\frac{q}{2}} - 1 - \frac{q}{2}t}{|t|^{\frac{q}{2}}+t^{2}} & \text{if } q>4,
\end{array}\right. \]
then we need to show boundedness from above of the function $Q$ on $\mathbb{R}\setminus\{0\}$.
This is a simple consequence of the continuity of $Q$ and finiteness of the limits:
\[ \lim_{t\to0} Q(t)=
\left\{\begin{array}{ll}
0& \text{if } 2<q<4, \\
\frac{q(q-2)}{8}& \text{if } q\geq4,
\end{array}\right.
\quad \lim_{t\to\pm\infty} Q(t) = 1.\]

(b) This time the substitution $t=\frac vu$ turns the inequality into
\[ \big| |1+t|^{q-2} - 1 \big| \lesssim_q
\left\{\begin{array}{ll}
|t|^{q-2} & \text{if } 2<q\leq 3, \\
|t|^{q-2} + |t| & \text{if } q>3.
\end{array}\right. \]
If we again denote the quotient of the two sides by $Q(t)$, then the inequality follows as before from the continuity of $Q$ and finiteness of the limits:
\[ \lim_{t\to0} Q(t)=
\left\{\begin{array}{ll}
0& \text{if } 2<q<3, \\
q-2& \text{if } q\geq3,
\end{array}\right.
\quad \lim_{t\to\pm\infty} Q(t) = 1. \qedhere \]
\end{proof}

\begin{proof}[Proof of Lemma \ref{lm:Hbounds}]
Start by noting that the quotient
\begin{equation}\label{coercive}
\frac{\mathcal{H}(f)}{\|f\|_{\textup{L}^1(\mathbb{R})}^2\|f\|_{\textup{L}^p(\mathbb{R})}^q}
\end{equation}
is invariant under arbitrary scalings, modulations, translations, and $\textup{L}^1$-normalized dilations.
Indeed,
if $f(x)=c f_1(x)$ for some $c\in\mathbb{C}$, then $\mathcal{H}(f)= |c|^{q+2}\mathcal{H}(f_1)$.
Moreover,
if $f(x)=e^{2\pi i x\xi_0} f_1(x)$ for some $\xi_0\in\mathbb{R}$, then $\mathcal{H}(f)=\mathcal{H}(f_1)$.
Similarly,
if $f(x)=f_1(x-x_0)$ for some $x_0\in\mathbb{R}$, then $\mathcal{H}(f)=\mathcal{H}(f_1)$.
Finally, dilation invariance is easily seen from the Fourier representation
\[ (\mathcal{Q} f)(\xi) = \int_{\mathbb{R}} \varphi (t) e^{-2\pi i t \xi} \,{\rm d} t, \]
where the function $\varphi$ is given by
\begin{equation}\label{defvarphi1}
\varphi(t) = \beta \int\limits_{\substack{\{x_1+x_2-x_3-x_4=t,\\ (x_1\wedge x_2)>(x_3\vee x_4)\,\textup{or}\, (x_1\vee x_2)<(x_3\wedge x_4)\}}}\!\!\!\!
f(x_1)f(x_2)\overline{f(x_3)f(x_4)} \, \,{\rm d}\sigma_3(x_1,x_2,x_3,x_4).
\end{equation}
Here we are integrating over a region in the affine hyperplane $\{x_1+x_2-x_3-x_4=t\}\subset \mathbb{R}^4$ with respect to the $3$-dimensional Hausdorff measure $\sigma_3$.
The constant $\beta>0$ is unimportant and it is coming from the non-orthonormal choice of coordinates.
It follows that, if $f(x)=\lambda^{-1}f_1(\lambda^{-1}x)$, then
\[ \varphi(t) = \frac{1}{\lambda} \varphi_1\Big(\frac{t}{\lambda}\Big),\quad (\mathcal{Q} f)(\xi) = (\mathcal{Q} f_1)(\lambda\xi),\quad \mathcal{H}(f) = \lambda^{-1}\mathcal{H}(f_1). \]
On the other hand,
\[ \|f\|_{\textup{L}^1(\mathbb{R})}^{2} \|f\|_{\textup{L}^p(\mathbb{R})}^{q}
= \lambda^{-1} \|f_1\|_{\textup{L}^1(\mathbb{R})}^{2} \|f_1\|_{\textup{L}^p(\mathbb{R})}^{q}. \]
This shows that the expression \eqref{coercive} is invariant under $\textup{L}^1$-normalized dilations, as claimed.
Now, going back to \eqref{Gaussian} and writing
\[ G(x)=c e^{-\alpha x^2+ v x}=ce^{\frac{\textup{Re}(v)^2}{4\alpha}}e^{i \textup{Im}(v) x}e^{-\alpha(x-\frac{\textup{Re}(v)}{2\alpha})^2}, \]
one sees that any Gaussian can be brought to standard form by an application of an appropriate scaling, modulation, translation, and dilation.
Given the symmetries of \eqref{coercive} just discussed, in verifying the claim of the lemma we can assume that the Gaussian approximation $G$ coincides with the standard Gaussian, $G(x)=\exp(-\pi x^2)$.
In this case, we are reduced to checking that $\mathcal{H}(G)>0$.
Again writing $\mathcal{Q} G$ as the Fourier transform of some function $\varphi$, then from formula \eqref{defvarphi1} above with $f=G$ it immediately follows that $\varphi$ is nonnegative and not identically zero.
Also, $|\widehat{G}(\xi)|^{q-2}$ is the Fourier transform of another Gaussian function $\psi$, given by
\[\psi(x)=\frac{1}{\sqrt{q-2}}e^{-\frac{\pi x^2}{q-2}},\]
which is clearly strictly positive.
It remains to invoke unitarity of the linear Fourier transform and observe that
\[ \mathcal{H}(G) = \big\langle \mathcal{Q}G, |\widehat{G}|^{q-2} \big\rangle_{\textup{L}^2(\mathbb{R})}
= \langle\varphi,\psi\rangle_{\textup{L}^2(\mathbb{R})} >0. \]
This concludes the proof of the lemma.
\end{proof}

The proofs of the two remaining lemmata rely on the observation that $\mathcal{Q} f$ can be pointwise controlled by the maximally truncated Fourier transform $\mathcal{F}_\ast f$, defined in \eqref{MaxTruncFT}.
Indeed,
\begin{align}
|(\mathcal{Q} f)(\xi)|
& \leq \int_{\mathbb{R}^2} \sup_{x\in\mathbb{R}}\Big|\int_{x}^{+\infty} f(y) e^{-2\pi i y\xi} \,{\rm d} y\Big|^2 |f(x_3)| |f(x_4)| \,{\rm d} x_3 \,{\rm d} x_4\notag\\
& = \|f\|_{\textup{L}^1(\mathbb{R})}^2 (\mathcal{F}_\ast f)^2(\xi). \label{PhiFstar}
\end{align}
In a similar way, $\mathcal{E}f$ can be  controlled pointwise by $\mathcal{F}_\ast f$.
To see this, start by noting that the reflection coefficient satisfies $|r(x,\xi)|\leq 1$. It then follows from the integral equation \eqref{intRicatti} that also $|r(x,\xi)|\leq2\|f\|_{\textup{L}^1(\mathbb{R})}$. This time we get
\begin{align}
|(\mathcal{E}f)(\xi)|
& \leq 3 \int_{\mathbb{R}^2} \sup_{x\in\mathbb{R}}\Big|\int_{x}^{+\infty} f(y) e^{-2\pi i y\xi} \,{\rm d} y\Big|^2 |f(x_3)| |f(x_4)| |r(x_4,\xi)|^2 \,{\rm d} x_3 \,{\rm d} x_4\notag\\
& \leq 12 \|f\|_{\textup{L}^1(\mathbb{R})}^4 (\mathcal{F}_\ast f)^2(\xi). \label{EFstar}
\end{align}

\begin{proof}[Proof of Lemma \ref{lm:diffHbounds}]
We start by rewriting $\mathcal{H}(f)-\mathcal{H}(G)$ as
\begin{equation}\label{mainerror}
\int_\mathbb{R} (\mathcal{Q} f)(\xi) \,\big(|\widehat{f}(\xi)|^{q-2}-|\widehat{G}(\xi)|^{q-2}\big)\,{\rm d}\xi
+ \int_\mathbb{R} \big((\mathcal{Q} f)(\xi)-(\mathcal{Q} G)(\xi)\big)\, |\widehat{G}(\xi)|^{q-2}\,{\rm d}\xi.
\end{equation}
Set $h=f-G$. Part (b) of Lemma \ref{lm:inequalities} allows us to bound the first integral in \eqref{mainerror} by a multiple of
\[ \int_\mathbb{R} |(\mathcal{Q} f)(\xi)||\widehat{h}(\xi)|^{q-2}\,{\rm d}\xi \;\textrm{ and, if } q>3, \textrm{ also }
\int_\mathbb{R} |(\mathcal{Q} f)(\xi)||\widehat{h}(\xi)||\widehat{G}(\xi)|^{q-3}\,{\rm d}\xi. \]
To bound these integrals, start by observing that estimates \eqref{PhiFstar} and \eqref{MPZ} imply
\[ \int_\mathbb{R} |(\mathcal{Q} f)(\xi)|^{\frac q2} \,{\rm d}\xi
\leq \|f\|_{\textup{L}^1(\mathbb{R})}^q \int_\mathbb{R}(\mathcal{F}_\ast f)^q(\xi)\,{\rm d}\xi
\lesssim \|f\|_{\textup{L}^1(\mathbb{R})}^q\|f\|_{\textup{L}^p(\mathbb{R})}^q. \]
H\"older's inequality and the Hausdorff--Young inequality then imply
\begin{align*}
\int_\mathbb{R} |(\mathcal{Q} f)(\xi)||\widehat{h}(\xi)|^{q-2}\,{\rm d}\xi
&\leq
\Big(\int_\mathbb{R} |(\mathcal{Q} f)(\xi)|^{\frac q2}\,{\rm d}\xi\Big)^{\frac 2q}\Big(\int_\mathbb{R} |\widehat{h}(\xi)|^{q}\,{\rm d}\xi\Big)^{1-\frac{2}q}\\
&\lesssim
\|f\|_{\textup{L}^1(\mathbb{R})}^2\|f\|_{\textup{L}^p(\mathbb{R})}^2\|h\|_{\textup{L}^p(\mathbb{R})}^{q-2}\\
&\lesssim
\gamma^{q-2} \|G\|_{\textup{L}^1(\mathbb{R})}^2\|G\|_{\textup{L}^p(\mathbb{R})}^q,
\end{align*}
where the last inequality is a consequence of \eqref{Gapproxf}, \eqref{approxLp}, and \eqref{approxL1}.
In particular, this term is acceptable.
In a similar way,
\[ \int_\mathbb{R} |(\mathcal{Q} f)(\xi)||\widehat{h}(\xi)||\widehat{G}(\xi)|^{q-3}\,{\rm d}\xi
\lesssim
\|f\|_{\textup{L}^1(\mathbb{R})}^2 \|f\|_{\textup{L}^p(\mathbb{R})}^2 \|h\|_{\textup{L}^p(\mathbb{R})} \|G\|_{\textup{L}^p(\mathbb{R})}^{q-3}
\lesssim
\gamma \|G\|_{\textup{L}^1(\mathbb{R})}^2\|G\|_{\textup{L}^p(\mathbb{R})}^q \]
is likewise acceptable for $q>3$. Now we focus on the second integral from \eqref{mainerror}.
It is useful to introduce the quadrilinear operator $\Phi$ by
\begin{align*}
& \Phi(f_1,f_2,f_3,f_4)(\xi) \\
& = \textup{Re} \int\limits_{\{(x_1\wedge x_2)>(x_3\vee x_4)\}}\!\!
f_1(x_1)f_2(x_2) \overline{f_3(x_3)f_4(x_4)} \, e^{2\pi i (-x_1-x_2+x_3+x_4) \xi} \, \,{\rm d} x_1 \,{\rm d} x_2 \,{\rm d} x_3 \,{\rm d} x_4,
\end{align*}
so that $\mathcal{Q}(f)=\Phi(f,f,f,f)$.
It can be estimated either by
\begin{align}
& |\Phi(f_1,f_2,f_3,f_4)(\xi)| \nonumber \\
& \leq \int_{\mathbb{R}^2} |f_1(x_1)| |f_2(x_2)| \,\sup_{x\in\mathbb{R}}\Big|\int_{-\infty}^{x} f_3(y) e^{-2\pi i y\xi} \,{\rm d} y\Big| \,\sup_{x\in\mathbb{R}}\Big|\int_{-\infty}^{x} f_4(y) e^{-2\pi i y\xi} \,{\rm d} y\Big| \,{\rm d} x_1 \,{\rm d} x_2 \nonumber \\
& = \|f_1\|_{\textup{L}^1(\mathbb{R})} \|f_2\|_{\textup{L}^1(\mathbb{R})} (\mathcal{F}_\ast f_3)(\xi) (\mathcal{F}_\ast f_4)(\xi), \label{PhiFstar1}
\end{align}
or (similarly as $\mathcal{Q}$) by
\begin{equation}\label{PhiFstar2}
|\Phi(f_1,f_2,f_3,f_4)(\xi)| \leq \|f_3\|_{\textup{L}^1(\mathbb{R})} \|f_4\|_{\textup{L}^1(\mathbb{R})} (\mathcal{F}_\ast f_1)(\xi) (\mathcal{F}_\ast f_2)(\xi).
\end{equation}
Multilinearity implies
\[ \mathcal{Q}f-\mathcal{Q}G
= \Phi(G,G,G,h) + \Phi(G,G,h,f) + \Phi(G,h,f,f) + \Phi(h,f,f,f), \]
so the $\textup{L}^{\frac{q}{2}}$-norm of this function is controlled using \eqref{PhiFstar1}, \eqref{PhiFstar2}, and H\"older's inequality by
\begin{align*}
\|\mathcal{Q}f-\mathcal{Q}G\|_{\textup{L}^\frac{q}{2}(\mathbb{R})}
& \leq \|G\|_{\textup{L}^1(\mathbb{R})}^2 \|\mathcal{F}_\ast G\|_{\textup{L}^q(\mathbb{R})} \|\mathcal{F}_\ast h\|_{\textup{L}^q(\mathbb{R})}
+ \|G\|_{\textup{L}^1(\mathbb{R})}^2 \|\mathcal{F}_\ast h\|_{\textup{L}^q(\mathbb{R})} \|\mathcal{F}_\ast f\|_{\textup{L}^q(\mathbb{R})} \\
& \quad + \|f\|_{\textup{L}^1(\mathbb{R})}^2 \|\mathcal{F}_\ast G\|_{\textup{L}^q(\mathbb{R})} \|\mathcal{F}_\ast h\|_{\textup{L}^q(\mathbb{R})}
+ \|f\|_{\textup{L}^1(\mathbb{R})}^2 \|\mathcal{F}_\ast h\|_{\textup{L}^q(\mathbb{R})} \|\mathcal{F}_\ast f\|_{\textup{L}^q(\mathbb{R})}.
\end{align*}
Invoking the Menshov--Paley--Zygmund inequality, \eqref{Gapproxf}, \eqref{approxLp}, and \eqref{approxL1} then gives
\[ \|\mathcal{Q}f-\mathcal{Q}G\|_{\textup{L}^\frac{q}{2}(\mathbb{R})} \lesssim \gamma \|G\|_{\textup{L}^1(\mathbb{R})}^2 \|G\|_{\textup{L}^p(\mathbb{R})}^2. \]
Finally, yet another application of H\"older's inequality combined with the linear Hausdorff--Young inequality for $G$ bounds the second integral in \eqref{mainerror} by a constant times
\[ \gamma \|G\|_{\textup{L}^1(\mathbb{R})}^2 \|G\|_{\textup{L}^p(\mathbb{R})}^q. \]
As noted before, this concludes the proof of the lemma.
\end{proof}

\begin{proof}[Proof of Lemma \ref{lm:Rbounds}]
The proof parallels that of the previous lemma, and so we shall be brief.
Using H\"older's inequality, the pointwise estimates \eqref{PhiFstar} and \eqref{EFstar},  and that fact that the maximally truncated Fourier transform $\mathcal{F}_\ast$ satisfies the estimate \eqref{MPZ}, we conclude that
the five integrals in \eqref{eq:integrals1} and \eqref{eq:integrals2} are  respectively controlled by
\begin{align*}
& \|\mathcal{Q} f\|_{\textup{L}^{\frac q2}(\mathbb{R})}^{\frac q2} \leq \|f\|_{\textup{L}^1(\mathbb{R})}^q \|\mathcal{F}_\ast f\|_{\textup{L}^q(\mathbb{R})}^q
\lesssim \|f\|_{\textup{L}^1(\mathbb{R})}^q \|f\|_{\textup{L}^p(\mathbb{R})}^q, \\
& \|\mathcal{E}f\|_{\textup{L}^{\frac q2}(\mathbb{R})}^{\frac q2} \lesssim \|f\|_{\textup{L}^1(\mathbb{R})}^{2q} \|\mathcal{F}_\ast f\|_{\textup{L}^q(\mathbb{R})}^q
\lesssim \|f\|_{\textup{L}^1(\mathbb{R})}^{2q} \|f\|_{\textup{L}^p(\mathbb{R})}^q, \\
& \|\mathcal{E}f\|_{\textup{L}^{\frac q2}(\mathbb{R})} \|\widehat{f}\|_{\textup{L}^q(\mathbb{R})}^{q-2}
\lesssim \big( \|f\|_{\textup{L}^1(\mathbb{R})}^{2q} \|f\|_{\textup{L}^p(\mathbb{R})}^q \big)^{\frac 2q} \|f\|_{\textup{L}^p(\mathbb{R})}^{q-2}
= \|f\|_{\textup{L}^1(\mathbb{R})}^4 \|f\|_{\textup{L}^p(\mathbb{R})}^q, \\
& \|\mathcal{Q} f\|_{\textup{L}^{\frac q2}(\mathbb{R})}^2 \|\widehat{f}\|_{\textup{L}^q(\mathbb{R})}^{q-4}
\lesssim \big( \|f\|_{\textup{L}^1(\mathbb{R})}^q \|f\|_{\textup{L}^p(\mathbb{R})}^q \big)^{\frac4q} \|f\|_{\textup{L}^p(\mathbb{R})}^{q-4}
= \|f\|_{\textup{L}^1(\mathbb{R})}^4 \|f\|_{\textup{L}^p(\mathbb{R})}^q, \\
& \|\mathcal{E}f\|_{\textup{L}^{\frac q2}(\mathbb{R})}^2 \|\widehat{f}\|_{\textup{L}^q(\mathbb{R})}^{q-4}
\lesssim \big( \|f\|_{\textup{L}^1(\mathbb{R})}^{2q} \|f\|_{\textup{L}^p(\mathbb{R})}^q \big)^{\frac4q} \|f\|_{\textup{L}^p(\mathbb{R})}^{q-4}
= \|f\|_{\textup{L}^1(\mathbb{R})}^8 \|f\|_{\textup{L}^p(\mathbb{R})}^q.
\end{align*}
Since $\|f\|_{\textup{L}^1(\mathbb{R})}\leq\delta$, the result follows.
\end{proof}

\section*{Acknowledgements}
V.K. was supported in part by the Croatian Science Foundation under the project 3526.
V.K. and J.R. were partially supported by the bilateral DAAD-MZO grant \emph{Multilinear singular integrals and applications}.
D.O.S. was partially supported by the Hausdorff Center for Mathematics and DFG grant CRC1060.
This work was started during a pleasant visit of the second author to the University of Zagreb, whose hospitality is greatly appreciated.
The authors are indebted to the anonymous referee for careful reading and valuable suggestions.

\bibliographystyle{alpha}

\end{document}